\documentclass[11pt]{article}

\usepackage[T1]{fontenc}
\usepackage{lmodern}
\usepackage{amsmath,amssymb,amsthm}
\usepackage{microtype}
\usepackage[margin=3.05cm]{geometry}
\usepackage{xcolor}
\usepackage{xurl}
\usepackage[
  colorlinks=true,
  linkcolor=blue!50!black,
  citecolor=blue!50!black,
  urlcolor=blue!50!black
]{hyperref}
\hypersetup{
  pdftitle={The maximum length of a chess game under the 2023 FIDE Laws},
  pdfauthor={Junyeop Yim},
  pdfkeywords={chess, FIDE Laws of Chess, seventy-five-move rule,
    fivefold repetition, longest chess game}
}
\newtheorem{theorem}{Theorem}[section]
\newtheorem{lemma}[theorem]{Lemma}
\newtheorem{proposition}[theorem]{Proposition}
\newtheorem{corollary}[theorem]{Corollary}
\theoremstyle{definition}
\newtheorem{definition}[theorem]{Definition}

\newcommand{\B}{\mathtt{B}}
\newcommand{\W}{\mathtt{W}}

\title{The maximum length of a chess game\\
under the 2023 FIDE Laws}
\author{Junyeop Yim\\
\small Department of Applied Mathematics, Kongju National University\\
\small \texttt{junyeobe0315@smail.kongju.ac.kr}}
\date{14 August 2026}

\begin{document}
\maketitle

\begin{abstract}
The FIDE Laws of Chess effective from 1 January 2023 terminate a game
automatically upon fivefold repetition or after 75 consecutive moves by each
player without a pawn move or a capture. Legal games of $17{,}697$ plies 
were previously known, and a scheduling analysis of the pawn
moves and captures gave an arithmetic upper bound of $17{,}699$ plies, but
games of $17{,}698$ or $17{,}699$ plies were not excluded.
We close this gap. Partitioning play at pawn moves
and captures yields at most $118$ segments. Each segment has length at most
$150$ plies, and each change in the colour of successive segment endpoints
reduces this bound by one ply. We prove that a game with all $118$ segments
must incur at least three such changes. Hence every game has at most
$150\cdot118-3=17{,}697$ plies, and the known constructions are optimal.
Moreover, in every maximum-length game, all sixteen pawns make six one-rank
moves and promote.
\end{abstract}

\section{Introduction}\label{sec:intro}

How long can a game of chess be under the mandatory termination rules? Before
2014, the answer depended on whether a player chose to claim a draw. Threefold
repetition and the fifty-move rule were claim-based, so cooperating players
could decline to claim and continue indefinitely; classical constructions of
unending play were given by Euwe~\cite{euwe1929} and by Morse and
Hedlund~\cite{morsehedlund1944}. On 1 July 2014, FIDE introduced automatic
draws after fivefold repetition and after 75 moves by each player without a
pawn move or a capture, making the extremal problem finite~\cite{fide2014laws}.
This paper uses the version of the Laws effective from 1 January
2023~\cite{fide2023laws}, which remains in force at the time of writing.

The bookkeeping behind the classical problem was developed by chess
problemists. Bonsdorff, Fabel, and Riihimaa~\cite{bonsdorff1974} summarise the
count of pawn moves and captures that can postpone a draw under the
fifty-move rule.
Heimo~\cite{heimo2004} observed that replacing 50 by 75 in this count gives
$17{,}697$ plies. After automatic termination was introduced,
Labelle~\cite{labelle2015longest} and Murphy~\cite{murphy2020longest} exhibited
legal games of this length. Murphy's critical-move scheduling analysis produced
an arithmetic upper bound of $17{,}699$ plies and left open whether endpoint
parity
forces a deficit of at least three plies in every game with $118$
segments~\cite[Secs.~2.1 and~3]{murphy2020longest}. We prove that it does,
thereby closing the two-ply gap.

Throughout, we disregard time forfeiture and optional endings such as
resignation, draws by agreement, and claimable draws. Play is continued from the
standard initial position until the first mandatory termination condition under
the 2023 Laws applies.

\begin{theorem}\label{thm:main}
Under the FIDE Laws of Chess effective from 1 January 2023, every legal move
sequence from the standard initial position, continued until the first
mandatory termination condition applies, has length at most $17{,}697$ plies.
This bound is attained.
\end{theorem}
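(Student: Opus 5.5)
The plan is to prove the upper bound by a segment decomposition, and to take attainment from the games of length $17{,}697$ already published by Labelle and Murphy.

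\emph{Critical moves.} Call a ply \emph{critical} if it is a pawn move or a capture. Each of the $16$ pawns makes at most six forward moves, so there are at most $96$ pawn moves. At most $30$ captures are possible, since the kings are never captured. A pawn changes file only by capturing, so White's and Black's pawns on a common file block each other unless a capture occurs. Following the classical Bonsdorff--Fabel--Riihimaa count, I would show that the sixteen pawns can all promote only if at least eight captures are made by pawns. Those captures are counted once as captures and are also pawn moves. With the right bookkeeping this gives at most $96+30-8=118$ critical plies, and hence at most $118$ segments. Here the segments are the stretch of play before the first critical move together with the stretches between consecutive critical moves, with the last segment ending at termination.

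\emph{Segment lengths.} By the seventy-five-move rule, a segment contains at most $150$ plies, counting the critical ply that ends it. More precisely, say a segment starts after a ply by colour $a$ and ends with a ply by colour $b$. If $a=b$, the parity of the segment's length costs one ply, so its length is at most $149$. The same holds for the initial segment, which starts before White's first move.

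\emph{Counting colour changes.} Write $c$ for the number of changes in the colour of successive segment endpoints. The total length is then at most $150\cdot 118 - c$, and it remains to show that $c\ge 3$ whenever there are $118$ segments. With all $118$ segments present, the counting above is tight. Every pawn makes exactly six one-rank moves and promotes, and exactly eight captures are pawn captures. All $30$ capturable units are taken, including the promoted pieces, and the game ends by the seventy-five-move rule.

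I would then sort the critical plies by colour: White makes $48$ pawn moves and Black makes $48$. The captures made by each side are constrained by the fact that each side must capture all $15$ capturable enemy units. The colour sequence of the critical plies determines $c$, together with the boundary effects at the start (White moves first) and at the end. A colour change can be avoided only if two consecutive critical plies are by the same side, with exactly $149$ intervening plies. I would bound $c$ from below by combining three observations. First, the initial segment starts with a White move. Second, the first critical move by Black cannot occur before White's first critical move in a way that keeps the parity aligned. Third, the final phase, after the last pawn promotes and when only a few units remain, forces the side making the last capture to be constrained.

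\emph{Main obstacle.} The hard step is proving that two changes never suffice. I would try to show that a sequence with $c\le 2$ forces one side's critical moves to form long monochromatic blocks, and that this contradicts pawn interdependence. White's pawns must capture or pass Black's pawns on shared files, and each of those eight pawn captures requires a Black unit to be present at the right moment. Such dependencies force alternation between White-critical and Black-critical moves at at least three points: near the opening, at the pawn-capture exchanges, and at the end. If a direct argument fails, I would fall back on a finite case analysis over the possible colour patterns of the eight pawn captures. For attainment, I would cite the $17{,}697$-ply games of Labelle and Murphy.
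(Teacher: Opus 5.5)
\textbf{Main gap: the three-switch bound is not proved.} The decisive step, that $118$ segments force at least three colour changes, is missing. You offer heuristics and a fallback case analysis, and the heuristics point the wrong way. In a maximum game the first segment ends on a Black ply and Black makes the first critical moves; the critical-move block pattern is forced to be $\B\W\B\W$. So nothing is lost at the opening, and your second ``observation'' is false.

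The missing idea is to recast $c\le2$ as a constraint on the colour sequence of the critical moves. That sequence then has at most three blocks, or at most two if it starts with White. Each of $\B$, $\W$, $\B\W$, $\W\B$, $\B\W\B$ can be excluded using $P=96$, $C\ge29$ and one chess fact.
\begin{itemize}
\item One block: only one side's pawns move, so $P\le48$.
\item Two blocks $XY$: $Y$ captures at most $15$ pieces, so $X$ makes at least $14$ captures before $Y$ makes any critical move. At least seven of those victims are unmoved $Y$-pawns, so $P\le54$.
\item Three blocks $XYX$: now $Y$ makes at least $14$ captures, so at least seven captured $X$-pieces began as pawns. Each made its six pawn moves before $Y$'s first critical move $\tau$, because every later $X$-critical move follows the entire $Y$-block. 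Before $\tau$, every $Y$-pawn is unmoved, and uncaptured, since a captured one could not make six moves. So all eight squares of $Y$'s home rank hold $Y$-pawns, and an $X$-pawn can enter that rank neither by advancing nor by capturing. It therefore makes at most four pawn moves before $\tau$, a contradiction.
\end{itemize}
This home-rank obstruction is the key lemma your sketch lacks.

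\textbf{The bound $K\le118$ also needs repair.} You bound the number of critical plies, $P+C-C_{\mathrm p}$. Under your own segment definition, a game with $118$ critical plies followed by noncritical play has $119$ segments. What excludes this is that $C=30$ leaves bare kings, a dead position, so no ply follows the last capture; this gives $C+T\le30$. Your equality profile ``all $30$ units are taken and the game ends by the seventy-five-move rule'' is therefore self-contradictory. The correct consequence is only $C\ge29$, and neither side need capture all fifteen enemy pieces.

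Similarly, ``all pawns promote only if $C_{\mathrm p}\ge8$'' does not give $P-C_{\mathrm p}\le88$ when $C_{\mathrm p}<8$. You need a quantitative trade-off. A file pair neither of whose pawns ever captures makes at most ten pawn moves, since while both are on the board they block each other and advance at most four ranks in total. Hence $P\le\min\{96,80+2f\}$ and $C_{\mathrm p}\ge f$, where $f$ counts file pairs with a pawn capture.

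Finally, your parity rule is reversed. Same-colour endpoints allow a $150$-ply segment, while a colour change forces odd length and hence at most $149$ plies. Your formula $150\cdot118-c$ in fact uses the correct version.
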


Equivalently, a maximum-length game consists of $8{,}848$ complete move pairs
followed by one additional White ply.

The proof of the upper bound reduces to three estimates. Write $L$ for the
game length. A \emph{critical move} is a pawn move or a capture, and critical
moves divide a game into segments. If $K$ is the number of segments and $S$
counts changes in the colour of successive segment endpoints, with the start
of play assigned
Black parity, then
\begin{equation}\label{eq:outline}
  L\le150K-S,\qquad
  K\le118,\qquad
  K=118\Longrightarrow S\ge3.
\end{equation}
The first estimate follows from the seventy-five-move rule and the parity of
segment endpoints. The second couples the number of pawn moves to the number
of pawn captures and then counts the capturable initial pieces. Equality in the
second estimate forces all sixteen pawns to make six moves and at least $29$
captures to occur. These conditions exclude every critical-move block pattern
with at most three blocks and yield the third estimate.

Sections~\ref{sec:setting}--\ref{sec:switches} establish the three estimates in
\eqref{eq:outline}. Section~\ref{sec:maximum} establishes sharpness, completes
the proof, and records the structure forced at equality.

\section{Rules, conventions, and the segment decomposition}
\label{sec:setting}

\subsection{Rule set and chess conventions}

For the purposes of this paper, a \emph{game} is a sequence of legal moves from
the standard initial position, continued until the first mandatory termination
condition in the 2023 FIDE Laws applies. Write $L$ for its length in
\emph{plies}, where one ply is one move by one player; White plays the
odd-numbered plies. The mandatory conditions relevant to the statement are as
follows.

\begin{enumerate}
  \item Checkmate (Article~5.1.1), stalemate (Article~5.2.1), and a dead
    position (Article~5.2.2) terminate the game immediately. Checkmate and
    stalemate are positions in which the player to move has no legal move,
    the king of that player being under attack in the case of checkmate and
    not under attack in the case of stalemate. A dead position is one from
    which neither player can checkmate by any possible sequence of legal
    moves.
  \item Fivefold repetition (Article~9.6.1) terminates the game: play ends
    when the same position has appeared at least five times. For this
    purpose, positions are considered the same if and only if the same player
    has the move, pieces of the same kind and colour occupy the same squares,
    and the possible moves of all pieces of both players are the same. In
    particular, a change in the availability of a legal en passant capture or
    in castling rights makes the positions different (Article~9.2).
  \item The seventy-five-move rule (Article~9.6.2) terminates the game after
    75 consecutive moves by each player, equivalently 150 consecutive plies,
    without a pawn move or a capture. If the final ply delivers checkmate,
    checkmate takes precedence.
\end{enumerate}

We disregard time forfeiture, resignation, draws by agreement, and claimable
draws, since none can increase the number of plies. The upper-bound argument
uses Article~9.6.2 throughout and Article~5.2.2 only in
Lemma~\ref{lem:ct30}; the other mandatory termination rules can only shorten a
game. Apart from these rules, the proof uses only alternation of play and the
elementary facts about the pieces recorded below.

We use the standard coordinates of a chessboard: the files are
$a,b,\ldots,h$ and the ranks are $1,2,\ldots,8$. Each side begins with
sixteen pieces: a king, a queen, two rooks, two bishops, two knights, and
eight pawns (Articles~2.2--2.3). Throughout the proof, \emph{piece} includes
a pawn. A \emph{capture} is a move to a square occupied by an opposing
piece; the captured piece is removed from the board as part of the same
move and takes no further part in the game (Article~3.1.1). The en passant
capture, described below, is the one exception, in which the captured piece
does not stand on the destination square.

White pawns begin on rank~2 and Black pawns on rank~7, one pawn of each
colour on each file; White pawns move towards rank~8 and Black pawns
towards rank~1. We call ranks~2 and~7 their respective home
ranks, and ranks~8 and~1 their promotion ranks. A pawn advances one rank to
an unoccupied square or, on its first move from its home rank, two ranks
provided that both squares ahead of it are unoccupied
(Articles~3.7.1--3.7.2). It captures one square diagonally forward onto a
square occupied by an opposing piece (Article~3.7.3). In the en passant
capture (Articles~3.7.3.1--3.7.3.2), immediately after an opposing pawn has
advanced two ranks from its home rank, a pawn may capture it as though it
had advanced only one rank, moving diagonally forward onto the empty square
that the opposing pawn passed over. While it remains a pawn, it changes
file only by making a capture. Upon reaching the promotion rank a pawn
must, as part of the same move, be exchanged for a new queen, rook, bishop,
or knight of its own colour (Article~3.7.3.3); after promotion it is no
longer a pawn.

A piece \emph{attacks} a square if the movement rules would allow it to
capture an opposing piece standing there, even when such a move is
otherwise illegal (Articles~3.1.2--3.1.3); a king is \emph{under attack}
when an opposing piece attacks the square it occupies (Article~3.9.1).
The king moves one square in any direction (Article~3.8.1), so it attacks
precisely the adjacent squares.

Kings are never captured, since capturing a king is not allowed
(Article~1.4.1); hence each side has fifteen capturable initial pieces:
eight pawns and seven non-pawn pieces. We follow each of the $32$ initial
pieces through the game, retaining its identity after promotion. Thus a
promoted pawn remains one of the eight pieces that began as pawns, and
promotion creates no additional capturable identity.

\subsection{Critical moves and segments}

\begin{definition}[Critical and noncritical moves]\label{def:critical}
A \emph{critical move} is a pawn move or a capture. A
\emph{noncritical move} is any other move. Thus an en passant capture is
critical for both reasons, promotion is critical because it is a pawn move, and
castling\footnote{Castling (Article~3.8.2) is a combined move of the king and
one rook of the same colour; it counts as a single move and captures
nothing.} is noncritical.
\end{definition}

Critical moves are precisely the moves that reset the counter in
Article~9.6.2.

\begin{definition}[Segments and endpoints]\label{def:segments}
Let
\[
  t_1<t_2<\cdots<t_m
\]
be the plies on which the critical moves of a game occur. Set $T=1$ if at
least one noncritical ply follows the last critical move, or if the game
contains no critical move, and set $T=0$ otherwise. Define
\[
  e_0=0,\qquad e_i=t_i\quad(1\le i\le m),
\]
and, when $T=1$, define $e_{m+1}=L$. Put $K=m+T$. For
$1\le i\le K$, the interval of plies
\[
  (e_{i-1},e_i]\cap\mathbb Z
  =\{e_{i-1}+1,\ldots,e_i\}
\]
is \emph{segment $i$}. When $T=1$, the final segment is called the
\emph{terminal noncritical segment}.
\end{definition}

For $i\ge1$, let $\operatorname{col}(e_i)$ be the colour of the player who
moves on ply $e_i$: White when $e_i$ is odd and Black when it is even. We assign the virtual endpoint $e_0=0$ the colour Black. This convention
treats the start of the game as having the parity of a preceding Black
ply, so that the parity relation between segment length and endpoint
colours in Lemma~\ref{lem:decomposition} applies to the first segment
as well: a first segment ending on a White ply has odd length.

\begin{definition}[Switch count]\label{def:switch}
The \emph{switch count} is
\[
  S=\#\bigl\{i\in\{1,\ldots,K\}:
    \operatorname{col}(e_i)\ne\operatorname{col}(e_{i-1})\bigr\}.
\]
\end{definition}

For example, the following endpoint parities give three segments and one
switch:
\[
  0_{\B}\xrightarrow{150}150_{\B}
  \xrightarrow{149}299_{\W}
  \xrightarrow{150}449_{\W}.
\]
The total length is therefore $150\cdot3-1=449$.

\begin{lemma}[Segment decomposition]\label{lem:decomposition}
Every game satisfies
\begin{equation}\label{eq:decomposition}
  L=150K-S-\sum_{i=1}^{K}\delta_i,
  \qquad \delta_i\in\mathbb Z_{\ge0}.
\end{equation}
Here $\delta_i$ is the shortfall of segment $i$ from length $150$ when its
endpoints have the same colour, and from length $149$ when they have different
colours. In particular,
\[
  L\le150K-S.
\]
\end{lemma}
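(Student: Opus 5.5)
The plan is to bound each segment's length separately and then add up. For $1\le i\le K$, write $\ell_i=e_i-e_{i-1}$ for the length of segment $i$. Since $e_K=L$ (either $e_K=e_{m+1}=L$ when $T=1$, or $e_K=t_m=L$ when $T=0$, because the game ends on the last critical move), telescoping gives $L=\sum_{i=1}^K\ell_i$. It therefore suffices to show two things for each $i$: first, $\ell_i\le150$, and second, $\ell_i\le149$ whenever $\operatorname{col}(e_i)\ne\operatorname{col}(e_{i-1})$. We then set $\delta_i=150-\ell_i$ in the first case and $\delta_i=149-\ell_i$ in the second. Summing over $i$, the number of segments of the second kind is exactly $S$ by Definition~\ref{def:switch}, and the identity \eqref{eq:decomposition} follows. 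The inequality $L\le150K-S$ is immediate because every $\delta_i\ge0$.

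For the length bound $\ell_i\le150$, note that the plies $e_{i-1}+1,\ldots,e_i-1$ are all noncritical. This holds because no critical move occurs strictly between consecutive critical plies, nor between $0$ and $t_1$, nor after $t_m$. Suppose this run had $150$ or more plies. Then at its $150$th ply, Article~9.6.2 would already have terminated the game: there have been 150 consecutive plies without a pawn move or capture, the counter having been reset at $e_{i-1}$ (or starting at $0$). Ply $e_i$ could then never be played, so the run has at most $149$ plies and $\ell_i\le150$. For the terminal noncritical segment, all of its plies are noncritical and the game stops no later than the 150th, giving again $\ell_i\le150$. Checkmate taking precedence does not add any ply, so it does not affect the count.

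The parity refinement rests on the colour convention. Since $\operatorname{col}(e)$ is determined by the parity of $e$, with $e_0=0$ coloured Black, consistent with even plies being Black, $\ell_i$ is even exactly when the endpoints have the same colour and odd exactly when they differ. An odd integer that is at most $150$ is at most $149$, which gives the second claim. Hence $\delta_i\in\mathbb Z_{\ge0}$ in every case.

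The only delicate step is the precise counting convention in Article~9.6.2. We need to confirm that "75 consecutive moves by each player" means termination immediately upon the 150th consecutive noncritical ply, so that a segment may contain at most $149$ noncritical plies followed by one critical ply. We also need to check the boundary cases: the first segment, where the count starts from the initial position, and the game with $m=0$. Once that reading of the rule is fixed, everything else is parity arithmetic.
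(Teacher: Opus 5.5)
Your proposal is correct and follows the paper's proof: you bound each segment by $150$ via Article~9.6.2, your run of at most $149$ noncritical plies before $e_i$ being the paper's ``first $150$ plies would be noncritical'' argument, then use the parity convention with $e_0=0$ Black to force odd, hence at most $149$, length on switching segments, and sum. The counting convention you flag as delicate is exactly the reading the paper fixes in Section~\ref{sec:setting} (``75 consecutive moves by each player, equivalently 150 consecutive plies''), and your explicit check that $e_K=L$ in both cases $T=0$ and $T=1$ is a detail the paper leaves implicit.
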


\begin{proof}
Fix a segment with endpoints $e_{i-1}$ and $e_i$. Every ply in the segment
except possibly its last is noncritical; in a terminal noncritical segment, the
last ply is noncritical as well. If the segment had length at least $151$,
then its first $150$ plies would be noncritical, and Article~9.6.2 would terminate the game at ply
$e_{i-1}+150$. Hence every segment has length at most $150$.

Because plies alternate colours, $e_i-e_{i-1}$ is even exactly when the two
endpoints have the same colour. A segment whose endpoints have different
colours therefore has odd length and hence length at most $149$. Subtracting
each segment length from the corresponding bound defines
$\delta_i\in\mathbb Z_{\ge0}$. Summing over the segments gives
\eqref{eq:decomposition}.
\end{proof}

Let $P$ be the number of pawn moves, $C$ the number of captures, and
$C_{\mathrm p}$ the number of captures made by pawns. The number of critical
moves is $P+C-C_{\mathrm p}$, and therefore
\begin{equation}\label{eq:K}
  K=(P-C_{\mathrm p})+(C+T).
\end{equation}

\section{The 118-segment bound}\label{sec:k118}

\begin{lemma}[Pawn moves]\label{lem:pawnbasics}
Each pawn makes at most six pawn moves. Consequently $P\le96$. If a pawn
makes six pawn moves, every one of them advances it by exactly one rank, and
the sixth move promotes it.
\end{lemma}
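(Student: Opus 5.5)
The argument tracks the rank of a single pawn through its moves. Take a White pawn; Black is symmetric after reflecting ranks, $r\mapsto 9-r$. Define its \emph{progress} as its current rank minus $2$. The pawn starts with progress $0$ on rank~2, since it begins on its home rank. While it remains a pawn, it stays on ranks $2$ through $7$: it cannot move backwards, and reaching rank~8 forces promotion as part of that same move, after which it is no longer a pawn.

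The rank change of each pawn move, taken from the movement rules recalled in Section~\ref{sec:setting}, is as follows.
\begin{itemize}
\item A one-rank advance increases the rank by exactly $1$.
\item A capture, including en passant, increases the rank by exactly $1$.
\item The double step increases the rank by $2$ and is available only as the pawn's first move from rank~2.
\end{itemize}
So every pawn move increases progress by at least one, and only the double step increases it by two. No pawn move leaves the rank unchanged or decreases it.

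Before its last pawn move, the pawn still stands on a rank at most $7$, so its progress is at most $5$. Hence it has made at most $5$ pawn moves before the last one, and at most $6$ in total. Each of the $16$ pawns therefore contributes at most $6$ to $P$, which gives $P\le 16\cdot 6=96$.

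For the equality case, suppose a pawn makes exactly six pawn moves. Its first five moves raise progress by at least $5$, and progress is at most $5$ before the sixth move. So each of the first five moves raises the rank by exactly one, and after them the pawn stands on rank~7. The sixth move also raises the rank by exactly one: a double step is impossible from rank~7, and in any case it may only be the first move. It therefore lands on rank~8 and promotes. The first move in particular cannot be a double step, since that would give progress at least $6$ after five moves. Note that "advances by exactly one rank" here includes captures, which also move one rank forward.

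The main point needing care is to cover every kind of pawn move uniformly, namely en passant and promotion captures. In both, the rank moves forward by exactly one, and the en passant destination is also one rank ahead of the capturing pawn. A further point is that "pawn moves" counts only moves made while the piece is still a pawn, so moves made after promotion do not count. With these observations the argument is a short monotone-potential count.
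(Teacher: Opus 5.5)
Your proof is correct and follows the same argument as the paper: the home rank and the promotion rank are six rank-steps apart, and every pawn move advances the pawn by at least one rank, so a pawn makes at most six pawn moves, and six moves force six one-rank steps ending in promotion. Your progress potential only makes explicit the cases the paper leaves implicit: the double step, ordinary captures, en passant captures, and the fact that moves after promotion do not count as pawn moves.
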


\begin{proof}
The promotion rank is six rank-steps from the home rank, and every pawn move
advances the pawn by at least one rank towards promotion. Equality therefore
requires six one-rank moves, the last of which promotes the pawn.
\end{proof}

\subsection{Pawn moves and pawn captures}

\begin{definition}[Origin pairs]\label{def:originpair}
For each file, the White pawn and Black pawn that begin on that file form its
\emph{origin pair}. An origin pair is \emph{resolved} if at least one of its
members makes a capture while still a pawn, and \emph{unresolved} otherwise.
A capture made on the promotion move counts as a capture by a pawn; a
capture made by the promoted piece on a later move does not. Let $f$ be
the number of resolved origin pairs.
\end{definition}

\begin{lemma}[Unresolved origin pair]\label{lem:cap10}
The two pawns in an unresolved origin pair make at most ten pawn moves in
total.
\end{lemma}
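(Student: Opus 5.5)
Since neither pawn of an unresolved origin pair makes a capture while it is a pawn, each of them stays on its original file for as long as it remains a pawn. The plan is to show that the two pawns obstruct each other on this shared file, so that they cannot both advance fully, and to count the rank-steps available to them.

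Let the file be $x$. Let the White pawn $w$ start on $x2$ and the Black pawn $b$ on $x7$. While both are pawns they stand on file $x$, with $w$ on rank $r_w$ and $b$ on rank $r_b$. A pawn moves straight forward only onto empty squares, and here it never captures, so neither can jump over the other. Hence $r_w<r_b$ holds as long as both remain pawns. This invariant holds initially ($2<7$). A forward move of $w$ would need the squares up to its destination empty, so it cannot reach or pass rank $r_b$; the same applies to $b$ from the other side.

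In particular, while both are pawns neither can promote: $w$ would have to reach rank $8>r_b$. So the first of the two to leave the board (or to stop being a pawn) must do so by being captured, not by promoting. Promotion by $w$ requires $b$ to have been captured earlier, and symmetrically.

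For the count, consider the moment the first of the two stops being a pawn. Up to that moment we have $r_w<r_b$. Each pawn move advances its pawn by at least one rank, so the number of moves made so far is at most $(r_w-2)+(7-r_b)\le 7-2-1=4$. At that moment the first pawn is captured and makes no further moves. The survivor makes at most six pawn moves in total by Lemma~\ref{lem:pawnbasics}, and in particular at most six minus the moves it has already made. Let $a$ be the moves made by the captured pawn (all before its capture) and $c$ the moves made by the survivor before that time, so $a+c\le 4$. Then the total is at most $a+6\le 4+6=10$. If neither pawn is ever captured or promoted, the total is at most $4$.

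The main point needing care is the blocking invariant, in particular the two-square advance, which also requires the intermediate square to be empty. That is what excludes passing over the opposing pawn. En passant is irrelevant, since it is a capture by a pawn and so cannot occur in an unresolved pair.
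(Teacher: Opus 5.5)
Your proof is correct and follows essentially the same argument as the paper. You use the same-file blocking invariant $r_w<r_b$ to show that the pair makes at most four pawn moves while both remain pawns and that neither can promote before the other is captured. You then bound the total by at most four moves for the captured pawn plus at most six for the survivor, by Lemma~\ref{lem:pawnbasics}.
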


\begin{proof}
While both pawns remain on the board as pawns, neither changes file, so they
remain on their common origin file with the White pawn strictly below the
Black pawn: straight pawn advances require unoccupied squares, so the two
pawns can neither meet nor pass one another. If they have advanced $a$ and
$b$ ranks, respectively, then
\[
  2+a<7-b,
\]
and hence $a+b\le4$. Thus the pair makes at most four pawn moves while both
members remain on the board. In particular, neither pawn can promote before
the other is captured.

If neither pawn is ever captured, their combined total is at most four.
Otherwise, the pawn captured first has made at most four moves, while the
surviving pawn makes at most six moves over its entire lifetime by
Lemma~\ref{lem:pawnbasics}. The combined total is therefore at most
$4+6=10$.
\end{proof}

\begin{lemma}[Pawn-capture lower bound]\label{lem:pawncapturefloor}
$C_{\mathrm p}\ge f$.
\end{lemma}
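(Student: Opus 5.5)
This follows directly from the definition of a resolved origin pair. The idea is an injection from resolved origin pairs into pawn captures. Let $\mathcal R$ be the set of resolved origin pairs, so $|\mathcal R|=f$. For each pair in $\mathcal R$, Definition~\ref{def:originpair} guarantees that at least one of its two members makes a capture while still a pawn. Among all such captures by members of that pair, let $\phi$ map the pair to the earliest one (ply order breaks ties, since there is one move per ply). A capture made on the promotion move counts here, because Definition~\ref{def:originpair} treats it as a capture by a pawn. Each such capture is therefore counted in $C_{\mathrm p}$.

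The key step is to show that $\phi$ is injective. Each capture is performed by exactly one moving piece, namely a single pawn. We track the $32$ initial pieces by identity, so this pawn is one of the sixteen initial pawns. Each initial pawn belongs to exactly one origin pair, the pair of the file it \emph{began} on. This holds even if it has since changed file by capturing. Hence the capture $\phi(\text{pair})$ determines the pair, and distinct pairs receive distinct captures.

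Therefore $f=|\mathcal R|\le C_{\mathrm p}$.

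The main subtlety is not combinatorial but definitional. Origin-pair membership must be attached to the initial identity of the pawn, not to its current file. Also, the capture counted must be one made while the piece is still a pawn, including the promotion move. Both points are secured by the identity convention of Section~\ref{sec:setting} and by the clause in Definition~\ref{def:originpair}. The en passant case needs no separate treatment: it is a capture made by a pawn, and the capturing pawn is unique.
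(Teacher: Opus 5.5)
Your proposal is correct and is essentially the paper's argument: choose the first pawn capture by a member of each resolved origin pair, and note that distinct pairs give distinct moving pawns and hence distinct moves. Your remarks on tracking pawns by initial identity, on promotion-move captures, and on en passant only make explicit what the paper's one-line proof leaves implicit.
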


\begin{proof}
For each resolved origin pair, choose the first capture made by one of its
pawns. This move contributes to $C_{\mathrm p}$. Different origin pairs give
different moving pawns and hence different moves.
\end{proof}

\begin{proposition}[Net pawn-move bound]\label{prop:pminusc}
For every game,
\[
  P-C_{\mathrm p}\le88.
\]
Equality implies
\[
  f=8,\qquad P=96,\qquad C_{\mathrm p}=8.
\]
\end{proposition}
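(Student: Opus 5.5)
Split the sixteen pawns into their eight origin pairs. For each file $j$, write $p_j$ for the total number of pawn moves made by the two pawns of that pair, so that $P=\sum_j p_j$. Likewise split $C_{\mathrm p}$ by origin pair: let $c_j$ be the number of captures made, while still pawns, by the two pawns of pair $j$. Then $C_{\mathrm p}=\sum_j c_j$, since every pawn capture is made by a pawn belonging to exactly one origin pair. The plan is to prove, for each pair separately, the per-file inequality
\[
  p_j-c_j\le 11,
\]
with equality only when the pair is resolved, $c_j=1$ and $p_j=12$. Summing over the eight files then gives $P-C_{\mathrm p}\le 88$.

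For an unresolved pair, Lemma~\ref{lem:cap10} gives $p_j\le10$ and $c_j=0$, hence $p_j-c_j\le10<11$. For a resolved pair, Lemma~\ref{lem:pawnbasics} gives $p_j\le12$, while resolution gives $c_j\ge1$ (this is just the per-pair version of Lemma~\ref{lem:pawncapturefloor}). Hence $p_j-c_j\le11$. Equality requires both $p_j=12$ and $c_j=1$. So in every case $p_j-c_j\le11$, and equality forces the pair to be resolved with $p_j=12$ and $c_j=1$.

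For the equality statement, suppose $P-C_{\mathrm p}=88$. Then every file attains $p_j-c_j=11$. Each pair is therefore resolved, so $f=8$. Moreover $p_j=12$ on every file, so $P=96$, and $c_j=1$ on every file, so $C_{\mathrm p}=8$. This gives exactly the three conclusions claimed.

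The only real care needed is that the bookkeeping $C_{\mathrm p}=\sum_j c_j$ is correct. This holds because each capture counted in $C_{\mathrm p}$ is made by a piece that is a pawn at the time of the move, including captures made on the promotion move, in line with Definition~\ref{def:originpair}. Each such piece has a unique origin pair, since identities are retained throughout the game. There is no real obstacle beyond this. The main content was already done in Lemma~\ref{lem:cap10}, which provides the gap of one ply that makes unresolved files strictly suboptimal.
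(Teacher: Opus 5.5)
Your proof is correct and is essentially the paper's argument. It uses the same three ingredients: Lemma~\ref{lem:cap10} for unresolved pairs, Lemma~\ref{lem:pawnbasics} for resolved pairs, and at least one pawn capture per resolved pair. The only difference is bookkeeping: you combine them file by file as $p_j-c_j\le 11$, whereas the paper first sums to get $P\le 80+2f$ and $C_{\mathrm p}\ge f$ and then optimises over $f$, reaching the same bound $80+f$ for $f\le 7$.
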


\begin{proof}
For each unresolved origin pair, Lemma~\ref{lem:cap10} gives at most ten pawn
moves. For each resolved origin pair, Lemma~\ref{lem:pawnbasics} gives at
most twelve. Hence
\[
  P\le\min\bigl\{96,10(8-f)+12f\bigr\}
   =\min\{96,80+2f\}.
\]
Together with $C_{\mathrm p}\ge f$, this gives
\[
  P-C_{\mathrm p}\le\min\{96,80+2f\}-f.
\]
For $f\le7$, the right-hand side is at most $80+f\le87$; for $f=8$, it is at
most $96-8=88$. Equality therefore requires $f=8$, after which the bounds
$P\le96$ and $C_{\mathrm p}\ge8$ force $P=96$ and
$C_{\mathrm p}=8$.
\end{proof}

\subsection{Captures and the terminal noncritical segment}

\begin{lemma}[Capture--terminal bound]\label{lem:ct30}
$C+T\le30$.
\end{lemma}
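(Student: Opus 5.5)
Each capture removes one capturable piece permanently and kings are never captured, so the plan is a count of captures against the thirty capturable identities. The identity convention of the paper (a promoted pawn keeps its original identity) makes the count clean: every capture removes exactly one of the $30$ capturable initial identities (fifteen per side, kings excluded), and a removed identity takes no further part in the game. Hence $C\le30$ outright, and the only work is the case $T=1$, where we must show $C\le29$.

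So suppose $T=1$, that is, the game ends with at least one noncritical ply after the last critical move, or contains no critical move at all. We argue by contradiction: assume $C=30$. Then every capturable piece has been removed, so after the thirtieth capture only the two kings remain on the board. This is where Article~5.2.2 enters. A position with only two kings is dead, since neither side can ever checkmate: a lone king attacks only adjacent squares, and two kings can never stand adjacent. A dead position terminates the game immediately, at the ply that produced it.

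The thirtieth capture is the last critical move, because afterwards there are no pawns left to move and nothing left to capture. The position after that capture is already dead, so the game ends on that ply and no noncritical ply can follow it. Thus $T=0$, contradicting $T=1$. The case in which the game contains no critical move gives $C=0$, so it is trivial. We conclude that $C\le29$ whenever $T=1$, and so $C+T\le30$ in all cases.

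The main obstacle is making the dead-position step airtight. We must confirm that the position with only the two kings arises exactly at the ply of the thirtieth capture, including when that capture is made en passant or on a promotion move. We must also justify that K versus K is dead under Article~5.2.2; the adjacency argument above suffices. Finally, we must record that no other source of extra material exists: promotions reuse existing identities and create no new capturable pieces, so the count of $30$ identities is exhaustive.
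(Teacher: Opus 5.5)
Your proposal is correct and follows the paper's argument. There are thirty capturable identities, so $C\le30$. When $C=30$, only the two kings remain after the last capture, that position is dead under Article~5.2.2, and so no noncritical ply can follow, which forces $T=0$. Casting this as a contradiction with $T=1$ rather than the paper's case split on $C$ is only cosmetic.
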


\begin{proof}
There are $30$ capturable initial pieces, so $C\le30$. If $C\le29$, then
$C+T\le30$ because $T\le1$. If $C=30$, only the two kings remain after the
last capture. A position containing only the two kings is
dead,\footnote{With only the kings on the board, neither king can be
under attack: the only possible attacker is the opposing king, and the two kings
can never stand on adjacent squares, because the move creating such a
position would place the moving player's own king under attack, which is
illegal (Article~3.9.2). Since checkmate requires the king to be under
attack, no sequence of legal moves leads to checkmate.} so by
Article~5.2.2 no ply can follow that capture. Hence no terminal
noncritical segment exists, $T=0$, and again $C+T=30$.
\end{proof}

\subsection{The segment bound and its equality case}

\begin{theorem}[Segment bound]\label{thm:k118}
Every game satisfies $K\le118$. If $K=118$, then
\begin{equation}\label{eq:equalityprofile}
  f=8,\qquad P=96,\qquad C_{\mathrm p}=8,\qquad C+T=30.
\end{equation}
In particular, all sixteen pawns make six one-rank moves, promoting on the
sixth, and $C\ge29$.
\end{theorem}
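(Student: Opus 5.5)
The plan is to add the two independent bounds already established, using the identity \eqref{eq:K}, $K=(P-C_{\mathrm p})+(C+T)$. Proposition~\ref{prop:pminusc} gives $P-C_{\mathrm p}\le88$, and Lemma~\ref{lem:ct30} gives $C+T\le30$. Summing yields $K\le118$ immediately.

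For the equality case, $K=118$ forces both summands to attain their maxima, since each is bounded separately and their sum is $118=88+30$. So $P-C_{\mathrm p}=88$ and $C+T=30$. The equality clause of Proposition~\ref{prop:pminusc} then supplies $f=8$, $P=96$ and $C_{\mathrm p}=8$. This establishes \eqref{eq:equalityprofile}.

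The remaining consequences are read off directly.
\begin{enumerate}
  \item Since there are sixteen pawns, each making at most six pawn moves by Lemma~\ref{lem:pawnbasics}, $P=96$ forces every pawn to make exactly six.
  \item The equality part of Lemma~\ref{lem:pawnbasics} then says these are one-rank advances, with the sixth promoting.
  \item Finally, $T\le1$ together with $C+T=30$ gives $C\ge29$.
\end{enumerate}

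I expect no real obstacle, since all the substance lies in Proposition~\ref{prop:pminusc} and Lemma~\ref{lem:ct30}. The only care needed is to state explicitly that equality in a sum of two separately bounded quantities forces equality in each.
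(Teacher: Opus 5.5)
Your proposal is correct and follows the paper's proof essentially verbatim. You sum the bounds of Proposition~\ref{prop:pminusc} and Lemma~\ref{lem:ct30} via \eqref{eq:K} and force equality in each summand; you then read off the pawn behaviour from the equality case of Lemma~\ref{lem:pawnbasics} and $C\ge29$ from $T\le1$.
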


\begin{proof}
By \eqref{eq:K}, Proposition~\ref{prop:pminusc}, and Lemma~\ref{lem:ct30},
\[
  K=(P-C_{\mathrm p})+(C+T)\le88+30=118.
\]
Equality requires equality in both component bounds.
Proposition~\ref{prop:pminusc} gives $f=8$, $P=96$, and
$C_{\mathrm p}=8$, while Lemma~\ref{lem:ct30} gives $C+T=30$. Since
$T\le1$, we have $C\ge29$. Finally, $P=96$ forces every pawn to attain the
equality case of Lemma~\ref{lem:pawnbasics}.
\end{proof}

\section{The three-switch bound}\label{sec:switches}

Throughout this section, assume $K=118$. By Theorem~\ref{thm:k118}, every
pawn makes six pawn moves and at least $29$ captures occur. Since $T\le1$, the
game has at least $117$ critical moves; in particular, the critical-move
colour sequence is nonempty.

\subsection{Block patterns}

\begin{definition}[Block pattern]\label{def:pattern}
For a nonempty finite sequence with entries in $\{\B,\W\}$, its \emph{block
pattern} is obtained by replacing each maximal constant run by a single
letter; these maximal runs are the \emph{blocks} of the sequence. The
\emph{critical-move colour sequence} is
\[
  \bigl(\operatorname{col}(t_1),\ldots,
        \operatorname{col}(t_m)\bigr),
\]
and the \emph{endpoint colour sequence} is
\[
  \bigl(\operatorname{col}(e_1),\ldots,
        \operatorname{col}(e_K)\bigr).
\]
Their block patterns are called the \emph{critical-move block pattern} and the
\emph{endpoint block pattern}, respectively.
\end{definition}

For example,
\[
  (\B,\B,\W,\W,\B)\longmapsto\B\W\B.
\]
The virtual endpoint $e_0$ is not part of the endpoint colour sequence; it is
used only in the definition of $S$.

\begin{lemma}[Endpoint blocks]\label{lem:blockrelation}
The endpoint colour sequence has at least as many blocks as the critical-move
colour sequence. If the endpoint colour sequence has $r$ blocks, then
\[
  S=
  \begin{cases}
    r-1,&\text{if the first endpoint is Black},\\
    r,&\text{if the first endpoint is White}.
  \end{cases}
\]
Consequently, an endpoint colour sequence with at least four blocks has
$S\ge3$.
\end{lemma}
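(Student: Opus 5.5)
The plan is to compare the two colour sequences directly. By Definition~\ref{def:segments}, $e_i=t_i$ for $1\le i\le m$. When $T=1$ there is one more endpoint, $e_{m+1}=L$. Hence the endpoint colour sequence is either equal to the critical-move colour sequence ($T=0$) or is that sequence with one entry, $\operatorname{col}(L)$, appended ($T=1$).

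In the degenerate case $m=0$ the critical-move colour sequence is empty and the first claim is vacuous. In this section $m\ge117$ anyway, so the comparison of blocks is only needed for nonempty sequences. Appending a letter to a nonempty sequence either lengthens its last block or creates a new one. So the number of blocks never decreases, which gives the first assertion.

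For the formula for $S$, split the index set $\{1,\dots,K\}$ in Definition~\ref{def:switch} into $i=1$ and $2\le i\le K$.

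For $2\le i\le K$, the condition $\operatorname{col}(e_i)\ne\operatorname{col}(e_{i-1})$ holds exactly at the boundaries between consecutive blocks of the endpoint colour sequence. An $r$-block sequence has $r-1$ such boundaries, so these indices contribute $r-1$.

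For $i=1$, the virtual endpoint $e_0$ is Black by convention. The index $i=1$ therefore contributes $1$ exactly when $\operatorname{col}(e_1)$ is White, and $0$ when it is Black. Adding the two contributions gives $S=r-1$ or $S=r$ as stated.

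The final consequence is then immediate: if $r\ge4$, then $S\ge r-1\ge3$. Nothing here is an obstacle. The only care needed is to confirm that $e_0$ is excluded from the endpoint colour sequence, as the paper notes after Definition~\ref{def:pattern}, so that the $i=1$ term is counted separately and not as a block boundary.
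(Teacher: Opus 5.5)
Your proposal is correct and follows the same route as the paper. The endpoint colour sequence is the critical-move colour sequence with possibly one entry appended, so the block count cannot drop. $S$ then counts the $r-1$ internal block boundaries, plus one more exactly when the first endpoint is White, since $e_0$ is virtually Black. Your version simply spells out these steps, including the $m=0$ edge case, more explicitly.
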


\begin{proof}
The endpoint colour sequence is the critical-move colour sequence, with one
terminal endpoint appended when $T=1$. Appending an element cannot reduce the
number of blocks. The formula for $S$ counts the $r-1$ changes between
consecutive blocks and, when the first endpoint is White, the additional
change from the virtual Black endpoint $e_0$.
\end{proof}

The possible critical-move block patterns with at most three blocks are
\[
  \B,\quad \W,\quad \B\W,\quad \W\B,\quad
  \B\W\B,\quad \W\B\W.
\]
We exclude them in turn. For $\W\B\W$ this is more than the inequality
$S\ge3$ requires: its initial White endpoint already incurs a switch against
the virtual Black endpoint $e_0$, so Lemma~\ref{lem:blockrelation} gives
$S\ge3$ directly. The exclusion nevertheless matters, because the four-block
conclusion of Theorem~\ref{thm:s3} is what determines the block pattern of
maximum-length games in Corollary~\ref{cor:equality}.

\subsection{One and two blocks}

\begin{proposition}\label{prop:oneblock}
The block patterns $\B$ and $\W$ are impossible when $K=118$.
\end{proposition}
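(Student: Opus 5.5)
We show that every critical move would have to be made by a single colour, and that this contradicts the equality profile of Theorem~\ref{thm:k118}. Assume $K=118$. Then \eqref{eq:equalityprofile} holds: all sixteen pawns make six pawn moves, so $P=96$. Each of the eight White pawns makes six moves, and every pawn move is critical. The same holds for the eight Black pawns. Hence the critical-move colour sequence contains at least $48$ entries $\W$ and at least $48$ entries $\B$.

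A sequence whose block pattern is $\B$ contains only $\B$ entries. A sequence whose block pattern is $\W$ contains only $\W$ entries. Either case contradicts the previous paragraph, so neither pattern is possible.

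There is essentially no obstacle. We only need that critical moves are defined so that every pawn move is critical (Definition~\ref{def:critical}), and that $P=96$ forces every pawn of each colour to move, by Lemma~\ref{lem:pawnbasics} and Theorem~\ref{thm:k118}. A weaker fact also suffices: $C_{\mathrm p}=8$ together with $f=8$ already gives pawn moves by both sides. The argument via $P=96$ is the most direct, so we use it.
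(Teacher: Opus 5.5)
Your proof is correct and is the paper's argument in contrapositive form. The paper observes that with a single block only one side's pawns can move, which caps $P$ at $8\cdot 6=48<96$; you observe that $P=96$ places at least $48$ critical moves of each colour in the sequence. Your closing aside is not justified as stated, since $f=8$ and $C_{\mathrm p}=8$ alone allow all eight pawn captures to be made by pawns of one colour, but nothing in your proof depends on it.
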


\begin{proof}
A pawn move is a critical move of the pawn's side. With only one block, only
one side's eight pawns can move, so $P\le8\cdot6=48$, contrary to $P=96$.
\end{proof}

\begin{proposition}\label{prop:twoblocks}
The block patterns $\B\W$ and $\W\B$ are impossible when $K=118$.
\end{proposition}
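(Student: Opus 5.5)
The plan is to exploit the fact that with only two blocks, all critical moves of one colour precede all critical moves of the other. The pawns of the second colour are then frozen on their home rank for the whole of the first block. By the equality profile \eqref{eq:equalityprofile} of Theorem~\ref{thm:k118}, every pawn of the first colour must nevertheless promote, and it cannot get past that frozen rank.

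Treat $\B\W$ first. Every Black critical move occurs before every White critical move. By Theorem~\ref{thm:k118}, $P=96$, so every pawn makes six pawn moves. In particular each White pawn makes all six of its moves, and White pawn moves are White critical moves, so they all lie in the White block.

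Next I will argue that no White pawn is captured during the Black block. A White pawn captured there would leave the board before making any move, contradicting that it makes six pawn moves. White also makes no capture and no pawn move during the Black block. Hence throughout the Black block, until the first White critical move, all eight White pawns stand on their initial squares, and every square of rank~$2$ is occupied by a White pawn.

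Now I use the Black pawns. By Theorem~\ref{thm:k118}, each Black pawn makes six one-rank moves, the sixth promoting on rank~$1$. All of these are Black critical moves, so they occur in the Black block. The fifth move therefore brings the pawn from rank~$3$ to rank~$2$ during the Black block, and there are only two ways to do this:
\begin{enumerate}
  \item A straight advance, which requires an unoccupied destination square. This is impossible, since every rank-$2$ square holds a White pawn.
  \item A diagonal capture onto rank~$2$, which would capture a White pawn (an en passant capture also removes a White pawn). This is impossible by the previous step.
\end{enumerate}
This contradiction excludes $\B\W$.

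The pattern $\W\B$ is symmetric. All White critical moves precede all Black ones, so rank~$7$ stays completely filled with uncaptured Black pawns throughout the White block. No White pawn can then make its fifth one-rank move, from rank~$6$ to rank~$7$, before promoting.

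I do not expect a serious obstacle. The only point requiring care is the observation that an early capture of a second-colour pawn contradicts $P=96$, which is what makes the home rank genuinely impassable. I will also note explicitly that en passant and captures onto rank~$2$ both remove a White pawn, so every route onto rank~$2$ is covered.
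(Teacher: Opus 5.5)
Your proof is correct, but it takes a different route from the paper's. The paper argues purely by counting. Writing the pattern as $(X,Y)$, it combines $C\ge29$ with $C_Y\le15$ to get at least $14$ captures by $X$, all made before $Y$'s first critical move. Since only seven $Y$-pieces are not pawns, at least seven $Y$-pawns are captured before they ever move, so $P\le96-7\cdot6=54$, contradicting $P=96$. You instead take $P=96$ as a premise. It forbids capturing any $Y$-pawn at all, since such a pawn would be removed before its first move. So $Y$'s home rank stays completely filled throughout the $X$-block, and no $X$-pawn can reach it, let alone promote. This is exactly the content of the paper's Lemma~\ref{lem:homerank}, applied to the prefix ending just before $Y$'s first critical move; with two blocks, that prefix automatically contains all six moves of every $X$-pawn. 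Your route gains economy and uniformity. It needs only the pawn part of the equality profile, not $C\ge29$ or the count of non-pawn pieces. It also shows that the two-block and three-block cases rest on the same blocking mechanism: in the three-block case the capture count is needed only to locate an $X$-pawn whose moves all precede $\tau$, which is automatic here. The paper's route, in exchange, uses no board geometry and yields the explicit bound $P\le54$. A minor point: a Black en passant capture lands on rank~$3$, so it can never be the move from rank~$3$ to rank~$2$. Your parenthetical about en passant is harmless but unnecessary.
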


\begin{proof}
Write the pattern as $(X,Y)$, so every critical move by $X$ precedes every
critical move by $Y$. Let $C_X$ and $C_Y$ be the respective numbers of
captures. Since $Y$ can capture at most the fifteen capturable pieces of $X$,
we have $C_Y\le15$, and therefore
\[
  C_X=C-C_Y\ge29-15=14.
\]
All these captures occur before the first critical move by $Y$. Among the
captured $Y$-pieces, at most seven did not begin as pawns, so at least seven
$Y$-pawns are captured before making any pawn move. They cannot move after
they are captured. Hence
\[
  P\le96-7\cdot6=54,
\]
contrary to $P=96$.
\end{proof}

\subsection{A home-rank obstruction}

\begin{lemma}[Home-rank lemma]\label{lem:homerank}
Let $X$ and $Y$ be the two opposing sides. Consider a prefix of a legal game
from the standard initial position. Suppose that
\begin{enumerate}
  \item every move made by $Y$ in this prefix is noncritical;
  \item no move made by $X$ in this prefix captures a $Y$-pawn.
\end{enumerate}
Then all eight $Y$-pawns remain on their home squares, no $X$-pawn reaches
$Y$'s pawn home rank, and every $X$-pawn makes at most four pawn moves during
the prefix.
\end{lemma}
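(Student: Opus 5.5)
The argument is a short chain of three observations, each using the previous one.

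\emph{First claim: the $Y$-pawns stay on their home squares.} Every $Y$-move in the prefix is noncritical, so in particular no $Y$-pawn makes a pawn move. A $Y$-pawn can therefore leave its home square only by being removed through a capture. Every capture during the prefix is made either by $X$ or by $Y$. Captures by $Y$ are excluded, since a capture is critical and all $Y$-moves are noncritical. Captures of a $Y$-pawn by $X$ are excluded by hypothesis~2. This includes en passant: an en passant capture of a $Y$-pawn would also be a capture of a $Y$-pawn, and it could only follow a $Y$ double step anyway, which is a pawn move. Hence all eight $Y$-pawns remain on their home squares throughout the prefix. Since $Y$ makes no captures, no $X$-piece is removed during the prefix, but we will not need this.

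\emph{Second claim: no $X$-pawn reaches $Y$'s home rank.} Take any $X$-pawn while it is still a pawn. It changes file only by capturing, so I will split into two cases.

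If it has made no capture, it is still on its origin file $x$. The $Y$-pawn of that file stands permanently on $Y$'s home square of file~$x$, directly ahead of it. As in the proof of Lemma~\ref{lem:cap10}, straight advances require unoccupied squares, so the $X$-pawn cannot move onto or past that square. It is therefore confined strictly below $Y$'s home rank in its own direction.

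If it has made captures and now stands on some file $x'$, the $Y$-pawn of file $x'$ is likewise fixed on $Y$'s home square of $x'$. The $X$-pawn can reach that rank only by a diagonal capture onto it, or by a straight advance onto it. A straight advance onto that square is blocked because the square is occupied. A diagonal capture onto $Y$'s home rank must land on a square holding a $Y$-piece. Every square of $Y$'s home rank is occupied by a $Y$-pawn, so such a capture would capture a $Y$-pawn, contradicting hypothesis~2.

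So in both cases no $X$-pawn ever stands on $Y$'s home rank during the prefix.

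\emph{Third claim: each $X$-pawn makes at most four pawn moves.} Every pawn move advances the pawn by at least one rank, as in Lemma~\ref{lem:pawnbasics}. An $X$-pawn starts on its home rank, which is five rank-steps from $Y$'s home rank. It cannot reach or pass $Y$'s home rank, because passing it would require first landing on it, and pawns move at most one rank at a time except the initial double step. Hence it advances at most four ranks in total, and so makes at most four pawn moves.

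The main care point is the double step. A double step from the $X$ home rank reaches at most the fourth rank from $X$'s side, nowhere near $Y$'s home rank, so it causes no trouble. The only other subtlety is the case of a pawn that has changed file, which is handled by the occupancy of the entire $Y$ home rank.
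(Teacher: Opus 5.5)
Your proof is correct and follows the paper's argument. Hypotheses~1 and~2 keep every $Y$-pawn on its home square, so the full $Y$ home rank blocks straight advances and turns any diagonal entry onto it into a forbidden capture of a $Y$-pawn. The four ranks that remain available then cap each $X$-pawn at four pawn moves. Your case split on whether the $X$-pawn has changed file is harmless but unnecessary, since the argument of your second case, that the entire $Y$ home rank is occupied, already covers the first.
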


\begin{proof}
A move by a $Y$-pawn would be critical, so the first condition implies that
no $Y$-pawn has moved. The second condition implies that none has been
captured. Thus every $Y$-pawn remains on its initial square.

An $X$-pawn cannot move onto $Y$'s pawn home rank. A forward move to an
occupied square is illegal, while a diagonal move to such a square would
capture the $Y$-pawn, contrary to the second condition. Between an $X$-pawn's
home rank and $Y$'s pawn home rank there are four ranks on which it may stand.
Every pawn move advances it by at least one rank, so it makes at most four
pawn moves during the prefix.
\end{proof}

\subsection{Three blocks}

The remaining case is ruled out by the home-rank obstruction.

\begin{proposition}\label{prop:threeblocks}
The block patterns $\B\W\B$ and $\W\B\W$ are impossible when $K=118$.
\end{proposition}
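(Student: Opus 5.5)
We argue by contradiction. Write the pattern as $(X,Y,X)$: every critical move of the first block is made by $X$, then all critical moves of $Y$ occur in the middle block, then the remaining critical moves of $X$ occur in the last block. Let $\Pi$ be the prefix of the game ending with the last ply of the first $X$-block.

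\textbf{Step 1: apply Lemma~\ref{lem:homerank} to $\Pi$.} By Theorem~\ref{thm:k118}, every $Y$-pawn makes six pawn moves, all of them critical moves of $Y$. So no $Y$-pawn is captured before the middle block begins, and in particular no $X$-move in $\Pi$ captures a $Y$-pawn. All $Y$-moves in $\Pi$ are noncritical by the block structure. Lemma~\ref{lem:homerank} therefore gives the following: every $X$-pawn makes at most four pawn moves in $\Pi$, none reaches $Y$'s pawn home rank, and all $Y$-pawns are still on their home squares.

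Since each $X$-pawn must make six moves in total (Theorem~\ref{thm:k118}), each still owes at least two moves after $\Pi$. During the middle block $X$ makes no critical move, so each $X$-pawn stays fixed as a pawn on its square throughout that block. It cannot be captured there either, since then it could not complete its six moves. Meanwhile all $48$ pawn moves of $Y$, including all eight $Y$-promotions, take place inside the middle block.

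\textbf{Step 2: the $Y$-pawns must get past eight frozen $X$-pawns.} A $Y$-pawn on a file containing a frozen $X$-pawn below it cannot advance past that pawn without leaving the file, and it leaves a file only by capturing. Split the files into two classes:
\begin{itemize}
\item $A$ is the set of files whose origin pair is resolved by an $X$-pawn capture, necessarily made in $\Pi$;
\item $B$ is the set of files whose origin pair is resolved by a $Y$-pawn capture.
\end{itemize}
Since $f=8$ and $C_{\mathrm p}=8$, each origin pair contributes exactly one pawn capture, so every pawn makes at most one capture and exactly one member of each pair does.

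On a file in $A$, the $Y$-pawn never captures and so stays on its file; hence no frozen $X$-pawn may stand on that file. On a file in $B$, the $X$-pawn never left its file, so the $Y$-pawn must make its single capture before reaching the frozen $X$-pawn, landing on an adjacent file that carries no frozen $X$-pawn, that is, a file of $A$ not landed on. This yields a rigid combinatorial configuration:
\begin{itemize}
\item each $X$-pawn of $A$ lands, after its one capture in $\Pi$, on a file of $B$;
\item every $B$-file neighbours an $A$-file;
\item the $B$-pawns of $Y$ capture $X$-pieces early, on ranks above the frozen $X$-pawns.
\end{itemize}

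\textbf{Step 3: derive the contradiction, and the main obstacle.} The third bullet of Step 2 is where I expect the real difficulty. Pure file-counting does not immediately contradict the configuration (alternating $A$/$B$ files seems combinatorially consistent), so additional resources must be counted. I would combine three constraints:
\begin{enumerate}
\item The $|A|$ captures by $X$-pawns in $\Pi$ must take non-pawn $Y$-pieces, of which there are only seven.
\item The $|B|$ captures by $Y$-pawns in the middle block must take $X$-pieces standing diagonally ahead of them above the frozen pawns. These cannot be $X$-pawns, since those never die, so they must be $X$-officers or promoted pieces; none of the latter exist yet, because no $X$-pawn has promoted by the end of the middle block.
\item The global count $C\ge29$: these captures, together with the later ones, must remove all $30$ capturable pieces or all but one of them.
\end{enumerate}

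Constraints 1 and 2 give $|A|\le7$ and $|B|\le7$ respectively. Then I would count captures over the whole game. $X$ needs all $15$ $Y$-pieces captured, but the eight $Y$-pawns promote and die only as promoted pieces. Similarly the $X$-officers are partly consumed by $Y$-pawn captures in the middle block. Since $A$ and $B$ partition the eight files, these demands are in tension. The main obstacle is to close this into an exact contradiction with $C\ge29$, together with the requirement that the last block by $X$ alone (where $Y$ makes no captures) must absorb the remaining captures. My first attempt is to show that, because $Y$ makes no capture in the last block, all $Y$-captures of the $X$-pawns after their promotion are impossible. That forces $C_Y\le 7$, hence $C_X\ge22$, while $X$ can capture only $15$ pieces, which is the desired contradiction.
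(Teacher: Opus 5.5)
Your argument is correct and is essentially the paper's proof in contrapositive form. Step~1 (the home-rank lemma on the prefix before $Y$'s block, plus the observation that an $X$-pawn still owing moves cannot be captured during the middle block), combined with the fact that $Y$ captures nothing outside the middle block, already shows that $Y$ never captures a piece that began as an $X$-pawn, so $C_Y\le7$, against $C_Y\ge C-15\ge14$; the paper instead fixes one captured $X$-pawn-origin piece and reaches the same contradiction via the home-rank lemma. Your closing ``first attempt'' is therefore already a complete proof, and you can delete Step~2 and the constraint list in Step~3, which are an unnecessary detour (the claim there that the $A$-file captures are necessarily made in $\Pi$ is also unjustified, though never used).
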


\begin{proof}
Write the pattern as $(X,Y,X)$, and let $C_X,C_Y$ be the numbers of captures
made by the two sides. Since $X$ can capture at most fifteen $Y$-pieces,
\[
  C_Y=C-C_X\ge29-15=14.
\]
At most seven of the pieces captured by $Y$ did not begin as pawns. Hence at
least seven of the captured $X$-pieces began as pawns.

Let $\tau$ be the ply on which $Y$ makes its first critical move, and fix
one such piece $p$,
which began as an $X$-pawn. By Theorem~\ref{thm:k118}, $p$ makes six pawn
moves before it is captured. Any critical move by $X$ after $\tau$ belongs to
the final $X$-block and therefore occurs after every critical move in the
middle $Y$-block, including the capture of $p$. Consequently all six pawn
moves of $p$ occur before $\tau$.

Consider the game prefix ending immediately before $\tau$. Every move made by
$Y$ in this prefix is noncritical. Moreover, $X$ cannot have captured a
$Y$-pawn:
such a pawn would have been removed before making any critical move and could
not make the six pawn moves required by $P=96$. Lemma~\ref{lem:homerank}
therefore applies to this prefix and implies that $p$ makes at most four pawn
moves before $\tau$, contradicting the six moves just established.
\end{proof}

\begin{theorem}[Three-switch bound]\label{thm:s3}
If $K=118$, then the critical-move colour sequence has at least four blocks
and $S\ge3$.
\end{theorem}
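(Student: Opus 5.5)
The plan is to assemble the case exclusions of this section; no new chess argument is needed. Assume $K=118$. By Theorem~\ref{thm:k118} the game has at least $117$ critical moves, so the critical-move colour sequence is nonempty. Its block pattern therefore alternates between $\B$ and $\W$ and has some number $r'\ge1$ of blocks.

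First, we enumerate the patterns with $r'\le3$. An alternating pattern is determined by its first letter and its length, so the only candidates are $\B$, $\W$, $\B\W$, $\W\B$, $\B\W\B$ and $\W\B\W$. Proposition~\ref{prop:oneblock} excludes the first two, Proposition~\ref{prop:twoblocks} excludes the next two, and Proposition~\ref{prop:threeblocks} excludes the last two. Hence $r'\ge4$, which is the first assertion of the theorem.

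Second, we pass from critical moves to endpoints. By Lemma~\ref{lem:blockrelation}, the endpoint colour sequence has $r\ge r'\ge4$ blocks. The formula in the same lemma gives $S\ge r-1\ge3$, whichever colour the first endpoint has.

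The only step needing care is the first: checking that the list of six patterns is exhaustive and that each of the three propositions was proved under the standing hypothesis $K=118$ alone. All three propositions use only the consequences $P=96$ and $C\ge29$ recorded in Theorem~\ref{thm:k118}, so their hypotheses are met. Given those propositions, the whole proof is bookkeeping.
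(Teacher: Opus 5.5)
Your proposal is correct and matches the paper's proof: Propositions~\ref{prop:oneblock}, \ref{prop:twoblocks} and~\ref{prop:threeblocks} exclude the six patterns with at most three blocks, and Lemma~\ref{lem:blockrelation} carries the four-block bound over to the endpoint sequence, giving $S\ge r-1\ge3$. One small point of attribution: the nonemptiness of the critical-move sequence follows directly from $K=m+T$ with $T\le1$ rather than from Theorem~\ref{thm:k118}, though this does not affect the argument.
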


\begin{proof}
Propositions~\ref{prop:oneblock},~\ref{prop:twoblocks}, and~\ref{prop:threeblocks}
exclude all nonempty block patterns with at most three
blocks. Hence the critical-move colour sequence has at least four blocks.
Lemma~\ref{lem:blockrelation} transfers this lower bound to the endpoint
colour sequence and gives $S\ge3$.
\end{proof}

\section{Completion of the proof and the equality case}
\label{sec:maximum}

\begin{proposition}[Sharpness]\label{prop:sharpness}
There exists a legal move sequence of $17{,}697$ plies from the standard
initial position such that no mandatory termination condition applies before
the final ply, and the final ply delivers checkmate.
\end{proposition}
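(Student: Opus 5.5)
The sharpness claim is constructive, and the plan is to exhibit (or cite and verify) an explicit game meeting every equality condition of Section~\ref{sec:k118} and Section~\ref{sec:switches}. By Lemma~\ref{lem:decomposition}, a game of length $17{,}697$ must have $K=118$, $S=3$, and every shortfall $\delta_i=0$. So the construction has to realise the full equality profile \eqref{eq:equalityprofile}: all sixteen pawns make six one-rank moves and promote; exactly eight pawn captures occur, one per origin pair, and each is the first capture of that pair; and $C+T=30$. Because the final ply is to be checkmate, the bare-kings ending is avoided, and I will aim for $C=29$ with $T=1$. The terminal segment is then $150$ or $149$ plies of noncritical play ending in mate, which is permitted because checkmate takes precedence on ply $150$ of Article~9.6.2.

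I will proceed in three steps.

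\textbf{Step one: fix the colour schedule.} The critical-move colour sequence should have exactly four blocks, with first endpoint Black, giving $S=3$ by Lemma~\ref{lem:blockrelation}. Take the pattern $\B\W\B\W$ and make every same-colour segment exactly $150$ plies and every switching segment exactly $149$ plies. The final length is $150\cdot118-3=17{,}697$, which is odd, consistent with a White mating ply.

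\textbf{Step two: design the pawn and capture skeleton.} In the opening $\B$ block, Black pawns advance and capture. Lemma~\ref{lem:homerank} and the argument of Proposition~\ref{prop:threeblocks} dictate this order: each side's pawns must clear their files by capturing opposing pawns on adjacent files, so that pawns of the two colours pass each other, and all of them promote. I will arrange that the eight pawn captures resolve the eight origin pairs, with pawns capturing pawns or pieces. I will then schedule the remaining $21$ piece captures across the blocks, keeping the counts consistent with the block order: in each block only the corresponding colour captures.

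\textbf{Step three: fill each segment with noncritical shuffling.} Between consecutive critical moves I insert exactly $149$ or $150$ plies of noncritical piece manoeuvres. These manoeuvres must never produce a fivefold repetition, never stalemate or checkmate early, never reach a dead position, and never give the side to move only critical moves. The standard approach is to use long noncritical tours of a rook or king through distinct squares, so that no position recurs more than four times. The counter resets at each critical move, and every position after it differs in pawn structure or material, so repetitions only need to be controlled within a single segment.

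The main obstacle is verification rather than proof: checking legality of roughly $17{,}700$ plies and the non-repetition condition. I would handle this by citing the explicit games of Labelle~\cite{labelle2015longest} and Murphy~\cite{murphy2020longest}. I would then confirm mechanically, with a rule-checking program implementing Articles~5 and~9.6, that no termination condition fires before the last ply and that the last ply is mate. I would also check the block and shortfall accounting above against those games, to confirm that they attain $\delta_i=0$ throughout.
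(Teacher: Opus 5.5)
Your proof rests on the same thing as the paper's: the explicit $17{,}697$-ply games of Labelle and Murphy, certified by a replay that checks legality, fivefold repetition and the seventy-five-move rule after every ply and confirms mate on the last. Steps one--three are design commentary that the existence claim does not need, and Step two errs in saying pawns clear their files by capturing opposing pawns: $P=96$ forces every pawn to promote, so in a maximum game nothing is ever captured while still a pawn, and all eight pawn captures take original or promoted pieces. The one point the paper makes that you miss is that no program deciding Article~5.2.2 is needed: each earlier position is not dead because the verified remainder of the game is a legal continuation ending in checkmate, and the existence of a next legal ply already excludes an earlier checkmate or stalemate.
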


\begin{proof}
Labelle~\cite{labelle2015longest} and Murphy~\cite{murphy2020longest} gave
explicit legal games of this length. Murphy's move sequence has been verified
by replay~\cite{repo}: the replay checks move legality, fivefold repetition,
and the seventy-five-move rule after every ply, and confirms checkmate on the
final ply. It includes no decision procedure for dead positions under
Article~5.2.2, but no such procedure is needed for this particular game.
From every
earlier position, the remaining verified suffix is a legal sequence ending in
checkmate, so that position is not dead; the existence of the next legal ply
also rules out earlier checkmate and stalemate. Hence no mandatory
termination condition applies before the final ply, and the lower bound
follows.
\end{proof}

\begin{proof}[Proof of Theorem~\ref{thm:main}]
Let $K$ be the number of segments. By Theorem~\ref{thm:k118}, $K\le118$. If
$K\le117$, then Lemma~\ref{lem:decomposition} gives
\[
  L\le150\cdot117=17{,}550.
\]
If $K=118$, then Theorem~\ref{thm:s3} gives $S\ge3$, and therefore
\[
  L\le150\cdot118-3=17{,}697.
\]
This proves the universal upper bound. Proposition~\ref{prop:sharpness} shows
that it is attained.
\end{proof}

\begin{corollary}[Structure of maximum-length games]\label{cor:equality}
Every game of length $17{,}697$ has
\[
  K=118,\qquad S=3,\qquad \delta_i=0\quad(1\le i\le118),
\]
and
\[
  f=8,\qquad P=96,\qquad C_{\mathrm p}=8,\qquad C+T=30.
\]
Thus all sixteen pawns make six one-rank moves, promoting on the sixth; at
least $29$ captures occur; and every segment has the maximum length permitted
by its endpoint parity.
The critical-move and endpoint block patterns are both $\B\W\B\W$.
\end{corollary}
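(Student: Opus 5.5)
The plan is to read off every equality condition by combining the exact identity of Lemma~\ref{lem:decomposition} with the bounds already proved. Let $L=17{,}697$. First, $K=118$: by Theorem~\ref{thm:k118} we have $K\le118$, and if $K\le117$ then Lemma~\ref{lem:decomposition} gives $L\le150\cdot117=17{,}550<17{,}697$. With $K=118$, the identity \eqref{eq:decomposition} becomes
\[
  17{,}697=150\cdot118-S-\sum_{i=1}^{118}\delta_i,
  \qquad\text{so}\qquad S+\sum_{i=1}^{118}\delta_i=3 .
\]
Theorem~\ref{thm:s3} gives $S\ge3$, and every $\delta_i$ is a nonnegative integer. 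Hence $S=3$ and $\delta_i=0$ for every $i$, which says that each segment attains the maximum length allowed by its endpoint parity.

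Next I take the pawn and capture data directly from the equality case of Theorem~\ref{thm:k118}, since $K=118$. This gives $f=8$, $P=96$, $C_{\mathrm p}=8$ and $C+T=30$. It also shows that every pawn makes six one-rank moves and promotes on the sixth, and that $C\ge29$.

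The only step that needs some care is identifying the block patterns. Theorem~\ref{thm:s3} says that the critical-move colour sequence has at least four blocks. By Lemma~\ref{lem:blockrelation}, the endpoint colour sequence has $r\ge4$ blocks, and $S$ equals either $r-1$ or $r$. Since $S=3$, the only possibility is $r=4$, and the formula then forces $S=r-1$, so the first endpoint is Black. Blocks alternate in colour, so the endpoint block pattern is $\B\W\B\W$.

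For the critical-move pattern, I use that the game has $m\ge117$ critical moves, so $e_1=t_1$. The endpoint colour sequence is the critical-move colour sequence, possibly with one extra final entry. Consequently the critical-move sequence has at most $r=4$ blocks, and by Theorem~\ref{thm:s3} it has at least four, so it has exactly four. It also starts with $\operatorname{col}(t_1)=\operatorname{col}(e_1)=\B$. An alternating pattern is determined by its first letter and its number of blocks, so the critical-move block pattern is also $\B\W\B\W$.

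I do not expect any step to be a real obstacle. The only places needing attention are checking that $t_1$ is the first endpoint, which requires $m\ge1$, and checking that removing one final entry cannot increase the number of blocks.
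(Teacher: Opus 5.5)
Your proof is correct and follows the paper's argument: $K=118$ via the $17{,}550$ bound, $S=3$ and $\delta_i=0$ from the decomposition identity together with Theorem~\ref{thm:s3}, the pawn and capture data from the equality case of Theorem~\ref{thm:k118}, and the block patterns from $S=3$. The only difference is the order in the last step, which is cosmetic: you fix the endpoint pattern first using the formula in Lemma~\ref{lem:blockrelation} and then pass to the critical-move pattern by deleting the possible terminal entry, whereas the paper bounds the critical-move blocks by switches first and then appends the terminal endpoint.
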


\begin{proof}
A game with $K\le117$ has length at most $17{,}550$, so a maximum-length game
has $K=118$. Combining Theorem~\ref{thm:s3} with
\eqref{eq:decomposition} gives
\[
  17{,}697
  =150\cdot118-S-\sum_{i=1}^{118}\delta_i
  \le150\cdot118-3.
\]
Equality holds throughout. Hence $S=3$ and every $\delta_i=0$. The equality
statement in Theorem~\ref{thm:k118} gives the remaining numerical assertions
and the behaviour of the pawns.

The critical-move colour sequence has at least four blocks by
Theorem~\ref{thm:s3}. Each transition between consecutive blocks of this
sequence is a change of colour between consecutive endpoints and contributes
one switch, so $S=3$ permits at most four blocks.
Thus there are exactly four. If the first block were White, the comparison
with the virtual Black endpoint would contribute a fourth switch. Hence the
critical-move block pattern is $\B\W\B\W$. Appending a terminal endpoint
cannot create a new block without increasing $S$, so the endpoint block
pattern is the same.
\end{proof}

\section*{Data and code availability}

The companion archive~\cite{repo} contains Murphy's $17{,}697$-ply move
sequence, the two separately implemented replay verifiers that perform the
checks described in the proof of Proposition~\ref{prop:sharpness}, and
ancillary computational cross-checks. None of these computations enters the
upper-bound proof. Verification instructions accompany the archive; the
development version and detailed verification notes are maintained in the
\href{https://github.com/junyeobe0315/longest-chess-game}{project repository on
GitHub}.

\section*{Acknowledgements}

Tom Murphy VII's construction and question motivated this work, and he
kindly commented on a draft, as did Alexis Langlois-R\'emillard. The author is
deeply grateful to Jinwan Park for generously taking the time to discuss this
work and for valuable advice and encouragement. The author also thanks
Fran\c{c}ois Labelle for his detailed historical account of the
longest-game problem.

\end{document}